\documentclass[11pt,english]{amsart}
\usepackage[T1]{fontenc}
\usepackage[latin1]{inputenc}
\usepackage{verbatim}
\usepackage{amstext}
\usepackage{amsthm}
\usepackage{amssymb}

\makeatletter
\numberwithin{equation}{section}
\numberwithin{figure}{section}
\theoremstyle{plain}
\newtheorem{thm}{\protect\theoremname}
\theoremstyle{remark}
\newtheorem{rem}[thm]{\protect\remarkname}
\theoremstyle{plain}
\newtheorem{prop}[thm]{\protect\propositionname}

\usepackage{tikz}
\usepackage{pgfplots}

\makeatother

\makeatother

\usepackage{babel}
\providecommand{\propositionname}{Proposition}
\providecommand{\remarkname}{Remark}
\providecommand{\theoremname}{Theorem}

\begin{document}


\addtolength{\textwidth}{0mm}
\addtolength{\hoffset}{-0mm} 
\addtolength{\textheight}{0mm}
\addtolength{\voffset}{-0mm} 


\global\long\def\AA{\mathbb{A}}%
\global\long\def\CC{\mathbb{C}}%
 
\global\long\def\BB{\mathbb{B}}%
 
\global\long\def\PP{\mathbb{P}}%
 
\global\long\def\QQ{\mathbb{Q}}%
 
\global\long\def\RR{\mathbb{R}}%
 
\global\long\def\FF{\mathbb{F}}%

\global\long\def\DD{\mathbb{D}}%
 
\global\long\def\NN{\mathbb{N}}%
\global\long\def\ZZ{\mathbb{Z}}%
 
\global\long\def\HH{\mathbb{H}}%
 
\global\long\def\Gal{{\rm Gal}}%

\global\long\def\bA{\mathbf{A}}%

\global\long\def\kP{\mathfrak{P}}%
 
\global\long\def\kQ{\mathfrak{q}}%
 
\global\long\def\ka{\mathfrak{a}}%
\global\long\def\kP{\mathfrak{p}}%
\global\long\def\kn{\mathfrak{n}}%
\global\long\def\km{\mathfrak{m}}%

\global\long\def\cA{\mathfrak{\mathcal{A}}}%
\global\long\def\cB{\mathfrak{\mathcal{B}}}%
\global\long\def\cC{\mathfrak{\mathcal{C}}}%
\global\long\def\cD{\mathcal{D}}%
\global\long\def\cH{\mathcal{H}}%
\global\long\def\cK{\mathcal{K}}%

\global\long\def\cF{\mathcal{F}}%
 
\global\long\def\cI{\mathfrak{\mathcal{I}}}%
\global\long\def\cJ{\mathcal{J}}%

\global\long\def\cL{\mathcal{L}}%
\global\long\def\cM{\mathcal{M}}%
\global\long\def\cN{\mathcal{N}}%
\global\long\def\cO{\mathcal{O}}%
\global\long\def\cP{\mathcal{P}}%
\global\long\def\cR{\mathcal{R}}%
\global\long\def\cS{\mathcal{S}}%
\global\long\def\cW{\mathcal{W}}%

\global\long\def\cQ{\mathcal{Q}}%
\global\long\def\kBS{\mathfrak{B}_{6}}%
\global\long\def\kR{\mathfrak{R}}%
\global\long\def\kU{\mathfrak{U}}%
\global\long\def\kUn{\mathfrak{U}_{9}}%
\global\long\def\ksU{\mathfrak{U}_{7}}%

\global\long\def\a{\alpha}%
 
\global\long\def\b{\beta}%
 
\global\long\def\d{\delta}%
 
\global\long\def\D{\Delta}%
 
\global\long\def\L{\Lambda}%
 
\global\long\def\g{\gamma}%
\global\long\def\om{\omega}%

\global\long\def\G{\Gamma}%
 
\global\long\def\d{\delta}%
 
\global\long\def\D{\Delta}%
 
\global\long\def\e{\varepsilon}%
 
\global\long\def\k{\kappa}%
 
\global\long\def\l{\lambda}%
 
\global\long\def\m{\mu}%

\global\long\def\o{\omega}%
 
\global\long\def\p{\pi}%
 
\global\long\def\P{\Pi}%
 
\global\long\def\s{\sigma}%

\global\long\def\S{\Sigma}%
 
\global\long\def\t{\theta}%
 
\global\long\def\T{\Theta}%
 
\global\long\def\f{\varphi}%
 
\global\long\def\ze{\zeta}%

\global\long\def\deg{{\rm deg}}%
 
\global\long\def\det{{\rm det}}%

\global\long\def\Dem{Proof: }%
 
\global\long\def\ker{{\rm Ker}}%
 
\global\long\def\im{{\rm Im}}%
 
\global\long\def\rk{{\rm rk}}%
 
\global\long\def\car{{\rm car}}%
\global\long\def\fix{{\rm Fix( }}%

\global\long\def\card{{\rm Card }}%
 
\global\long\def\codim{{\rm codim}}%
 
\global\long\def\coker{{\rm Coker}}%

\global\long\def\pgcd{{\rm pgcd}}%
 
\global\long\def\ppcm{{\rm ppcm}}%
 
\global\long\def\la{\langle}%
 
\global\long\def\ra{\rangle}%

\global\long\def\Alb{{\rm Alb}}%
 
\global\long\def\Jac{{\rm Jac}}%
 
\global\long\def\Disc{{\rm Disc}}%
 
\global\long\def\Tr{{\rm Tr}}%
 
\global\long\def\Nr{{\rm Nr}}%

\global\long\def\NS{{\rm NS}}%
 
\global\long\def\Pic{{\rm Pic}}%

\global\long\def\Km{{\rm Km}}%
\global\long\def\rk{{\rm rk}}%
\global\long\def\Hom{{\rm Hom}}%
 
\global\long\def\End{{\rm End}}%
 
\global\long\def\aut{{\rm Aut}}%
 
\global\long\def\SSm{{\rm S}}%

\global\long\def\psl{{\rm PSL}}%
 
\global\long\def\cu{{\rm (-2)}}%
 
\global\long\def\mod{{\rm \,mod\,}}%
 
\global\long\def\cros{{\rm Cross}}%
 
\global\long\def\nt{z_{o}}%

\global\long\def\co{\mathfrak{\mathcal{C}}_{0}}%

\global\long\def\ldt{\Lambda_{\{2\},\{3\}}}%
 
\global\long\def\ltd{\Lambda_{\{3\},\{2\}}}%
\global\long\def\lldt{\lambda_{\{2\},\{3\}}}%

\global\long\def\ldq{\Lambda_{\{2\},\{4\}}}%
 
\global\long\def\lldq{\lambda_{\{2\},\{4\}}}%
 
\subjclass[2000]{Primary: 05B35 · 14N20 · 52C30 }
\title{the cuspidal cubic and line arrangements with only triple points}
\author{Xavier Roulleau}
\begin{abstract}
We describe a new infinite family of line arrangements in the projective
plane with only triple points singularities and recover previously
known examples. 
\end{abstract}

\maketitle

\section{Introduction}

Line arrangements, i.e. finite union of lines in the projective plane,
are studied in various domains: algebra, topology, combinatorics...
A double (respectively triple) point on a line arrangement is a point
where exactly $2$ (respectively $3$) lines of the arrangement meet.
The celebrated Sylvester-Gallai Theorem assert that any line arrangement
over the real field has at least one double point, or is the trivial
example: the union of lines passing through the same point. 

Hirzebruch \cite{Hirzebruch} then proved that over the complex field,
a line arrangement is either the trivial example or possesses double
or triple points. There exists up to now a unique non-trivial example
of a complex line arrangement with only triple points. Since then,
the construction of line arrangements with many or only triple points
attracted attention see e.g. \cite{HKS} for a historical account.
In the present paper, we give a new construction, over the finite
fields, of line arrangements with only triple points. Line arrangements
with only triple points are also interesting for their combinatorics,
linked with the Steiner triple systems, which are set $T$ of subsets
of order $3$ of a finite set, such that any subset of order two is
contained in a unique subset $t\in T$.

Before presenting our results, let us introduce some notations and
recall some properties of line arrangements. If $\cL=\ell_{1}+\dots+\ell_{n}$
is a line arrangement labeled by $\{1,\dots,n\}$, the data $M(\cL)$
of the triples $\{i,j,k\}\subset\{1,\dots,n\}$ such that the lines
$\ell_{i},\ell_{j}$ and $\ell_{k}$ meet at a common point is called
the matroid associated to $\cL$; it encodes the combinatorics of
$\cL$. If $\cL$ has only triple points, $M(\cL)$ is a Steiner triple
system. 

If $\g$ is a projective transformation of the plane, one may define
$\g\cL=\g\ell_{1}+\dots+\g\ell_{n}$. Both line arrangements $\cL,\,\g\cL$
have the same combinatorics: $M(\cL)=M(\g\cL)$. Given a matroid $M$,
a labelled line arrangement $\cL$ such that $M(\cL)=M$ is said a
realization of $M$. Given a field $K$, there exists a scheme parametrizing
the realizations over $K$ and the quotient of these realizations
modulo the action of $\text{PGL}_{3}(K)$ is called the realization
space. One says that a line arrangement over a field $K$ is rigid
if $\cR(M)_{/K}$ is zero dimensional. 

The aim of the present paper is to describe a new infinite family
of line arrangements in the plane with only triple points, and to
study some of their associated realization spaces:
\begin{thm}
For any power $q$ of $3$, there exists a line arrangement $\cL_{q}$
of $q$ lines over $\FF_{q}$ with $\tfrac{q(q-1)}{6}$ triple points
and no other singularities. \\
Let $M_{q}=M(\cL_{q})$ be the matroid associated to $\cL_{q}$. For
$q=3^{n}$ and $n\geq3$, the matroid $M_{q}$ has no realizations
over any field of characteristic $\neq3$. \\
The realization space $\cR(M_{27})_{/\overline{\FF}_{3}}$ is $3$
dimensional with a two dimensional immersed component. The reduced
scheme of $\cR(M_{27})_{/\overline{\FF}_{3}}$ is an open sub-scheme
of a quadric in $\PP^{4}$.
\end{thm}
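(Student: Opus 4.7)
The plan divides into three parts, mirroring the three assertions.

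\textbf{Construction of $\cL_{q}$.} I would work in the dual plane $(\PP^{2})^{\vee}$ and fix a cuspidal cubic $C$. Its smooth locus $C_{\mathrm{sm}}$ carries a canonical group structure isomorphic to $(\mathbb{G}_{a},+)$, and under a parametrization $t\mapsto P_{t}$ of $C_{\mathrm{sm}}(\FF_{q})$ by $\FF_{q}$, three points $P_{t_{1}},P_{t_{2}},P_{t_{3}}$ are collinear iff $t_{1}+t_{2}+t_{3}=0$. Dualizing the $q$ points $\{P_{t}\}_{t\in\FF_{q}}$ yields $q$ lines $\{\ell_{t}\}$ in $\PP^{2}$, with $\ell_{t_{1}},\ell_{t_{2}},\ell_{t_{3}}$ concurrent iff $t_{1}+t_{2}+t_{3}=0$. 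The characteristic-$3$ miracle is that $t_{3}=-(t_{1}+t_{2})$ is automatically distinct from $t_{1}$ and $t_{2}$ whenever $t_{1}\ne t_{2}$ (because $2\equiv -1$), so every pair of lines of $\cL_{q}$ sits in a unique concurring triple, giving exactly $q(q-1)/6$ triple points. No four lines concur, because a line of $(\PP^{2})^{\vee}$ meets $C$ in at most three points by B\'ezout. This identifies $M_{q}$ as the Steiner triple system of the affine space $AG(n,3)=(\FF_{3})^{n}$, whose blocks are the three-element subsets summing to $0$.

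\textbf{Non-realizability in characteristic $\ne 3$ when $n\geq 3$.} The plan is to reduce to $n=3$: any affine $3$-flat of $AG(n,3)$ induces a sub-matroid isomorphic to $AG(3,3)=M_{27}$ inside $M_{3^{n}}$, so any realization of $M_{3^{n}}$ restricts to a realization of $M_{27}$, and it is enough to rule out $M_{27}$ over any field $K$ with $\mathrm{char}(K)\ne 3$. My approach is a von Staudt-style coordinatization: using chains of ternary incidences in $M_{27}$, one can recover the additive structure of $\FF_{3}$ on a suitably calibrated set of lines, thereby forcing the relation $1+1+1=0$ in $K$. Equivalently, after fixing four lines in general position via $\mathrm{PGL}_{3}$ and coordinatizing the remaining $23$, the incidence ideal generated by the prescribed concurrence polynomials should contain $3$, which in principle can be verified by a Gr\"obner basis computation. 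Identifying the correct calibrating sub-configuration---or the right saturation of the incidence ideal---is the main obstacle; it should be guided by the $\FF_{3}$-vector-space structure of $AG(3,3)$, which is precisely what prevents realizability in other characteristics.

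\textbf{The realization space $\cR(M_{27})_{/\overline{\FF}_{3}}$.} Here the plan is explicit computation. Using $\mathrm{PGL}_{3}$ to normalize four lines of $\cL_{27}$ in general position, I would propagate the prescribed concurrences line by line; each step determines the next line up to finitely many choices, subject to polynomial constraints coming from the remaining triples that must still concur. The realization space sits as an affine subscheme of the resulting parameter space, and I would then analyze its defining ideal by primary decomposition: its radical should cut out an open subscheme of a quadric threefold in a $\PP^{4}$, while its embedded primes produce the two-dimensional immersed component sitting inside the full three-dimensional non-reduced scheme. I expect to obtain an intrinsic description of the $\PP^{4}$ and its quadric from the symmetries of $M_{27}$, in particular from the Frobenius action of $\Gal(\FF_{27}/\FF_{3})\cong\ZZ/3\ZZ$ on the space of realizations. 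Carrying out the primary decomposition cleanly, and matching it with such an intrinsic description, is where I anticipate the bulk of the work.
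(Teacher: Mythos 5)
Your proposal is correct and follows essentially the same route as the paper: the dual of the $\FF_q$-points on the smooth locus of the cuspidal cubic with the group-law collinearity criterion $t_1+t_2+t_3=0$ (the paper rules out extra singularities by noting the combinatorial maximum $\tfrac{1}{3}\binom{q}{2}$ of triple points is attained, while you use B\'ezout on the dual cubic --- both work), the same reduction of non-realizability to the sub-matroid $M_{27}$ via the subgroup $(\FF_3)^3\subset(\FF_3)^n$, and the same computer-algebra verification (the paper uses OSCAR) for the base case and for the primary decomposition of $\cR(M_{27})$. Your hoped-for von Staudt coordinatization of $M_{27}$ is not carried out in the paper either; its Gr\"obner-basis fallback is exactly what the paper's cited computation amounts to.
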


We obtain a line arrangement $\cL_{q}$ as the dual of the $q$ points
defined over $\FF_{q}$ on the smooth part of the cuspidal cubic $y^{2}z=x^{3}$. 

The previously known line arrangements with only triple points are:

a) The trivial example: three lines meeting at one point.

b) Over a field containing a primitive third root of unity (in particular
over any algebraically closed field of characteristic $\neq3$), one
has the $\text{Ceva}(3)$ line arrangement with 9 lines:
\[
(x^{3}-y^{3})(x^{3}-z^{3})(z^{3}-y^{3})=0,
\]
and $12$ triple points. It is the dual of the Hesse configuration,
i.e. of the $9$ flex points of a smooth cubic. In characteristic
$3$, there is also a line arrangement of $9$ lines having the same
associated matroid as $\text{Ceva}(3)$. It is the $9$ lines in $\PP^{2}(\FF_{3})$
not containing a given point $p$ in $\PP^{2}(\FF_{3})$. These line
arrangements are rigid. 

c) The Fano plane: the $7$ lines in $\PP^{2}(\text{\ensuremath{\FF}}_{2})$.
It is also a rigid line arrangement. 

d) For any $n\geq4$, the sequence of line arrangements $\cC_{2^{n}-1}'$
with $2^{n}-1$ lines defined over $\overline{\FF}_{2}$ constructed
in \cite{HKS}. Their construction is obtained by taking the dual
of a generic projection to $\PP^{2}$ of the $2^{n}-1$ points $\PP^{n-1}(\FF_{2})$
in $\PP^{n-1}$.  

e) A rigid example over $\FF_{11}$ with $19$ lines (and $57$ triple
points) constructed in \cite{HKS}.

It is an open question \cite{Urzua} whether or not a) and b) are
the unique examples of line arrangements over $\CC$ with only triple
points. 

For any $n\geq2$, we also obtain over $\overline{\FF}_{2}$ some
line arrangements $\cC_{2^{n}-1}$ with $2^{n}-1$ lines and only
triple points, by using the dual of the non-zero points in $\AA^{1}(\FF_{2^{n}})$
in the smooth part of the cuspidal cubic curve (which is isomorphic
to $\AA^{1}$). 

For $n\geq4$, we obtain that our line arrangement $\cC_{2^{n}-1}$
and the line arrangement $\cC_{2^{n}-1}'$ of example d) define isomorphic
matroids, thus $\cC_{2^{n}-1}$ is another construction of $\cC_{2^{n}-1}'$. 

Our construction of $\cC_{n}$ generalizes a bit the one of $\cC_{n}'$
since it holds for $n=2$ and $3$. The line arrangement $\cC_{3}$
is the trivial example, and $\cC_{7}$ is the Fano plane, example
c). About the line arrangement $\cL_{n}$: for $n=3$, it is the trivial
example, and $\cL_{9}$ is example b).   We checked that the $19$
points in the dual of example e) are not contained in a cubic curve.
This example is the only one known that does not appear to be related
to cubic curves.

\section{Line arrangements using the cuspidal cubic}

\subsection{The cuspidal cubic}

Let $C\hookrightarrow\PP^{2}$ be the cuspidal cubic over a field
$K$ with a flex point $O$ defined over $K$. Let $C^{*}$ be the
complement of the singular point. As for a smooth cubic curve, one
may define a composition law (denoted by $+$) on $C^{*}$ by chords
and tangent, so that $O$ is neutral. 
\begin{thm}
\label{thm:Silver}(See e.g \cite[Chapter III, Proposition 2.5]{Silverman}).
The composition law $+$ makes $C^{*}(K)$ an abelian group isomorphic
to $(K,+)$. Three points $a,b,c\in C^{*}(K)$ (identified with $K$)
are on a line if and only if 
\[
a+b+c=0\text{ in }K.
\]
\end{thm}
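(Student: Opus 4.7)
The plan is to exhibit an explicit parametrization of $C^{*}$ by the additive group $(K,+)$ and then verify both the group-law assertion and the collinearity criterion at the same time, by a direct determinant computation.

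First I would normalize coordinates. Up to a projective transformation defined over $K$, I may assume that the flex $O$ is the point $[0:1:0]$ and the cusp is $[0:0:1]$, so that $C$ has equation $y^{2}z=x^{3}$ and the tangent line to $C$ at $O$ is $z=0$. In the affine chart $y=1$ the equation becomes $z=x^{3}$, and the smooth part $C^{*}(K)$ is parametrized by
\[
\phi^{-1}:K\longrightarrow C^{*}(K),\qquad x\longmapsto [x:1:x^{3}],
\]
with $O$ corresponding to $x=0$. Let $\phi:C^{*}(K)\to K$ denote the inverse map.

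Next I would establish the collinearity criterion. Three points $P_{i}=[x_{i}:1:x_{i}^{3}]$ (with multiplicity, reading tangencies in the usual way) lie on a common line iff
\[
\det\begin{pmatrix}x_{1} & 1 & x_{1}^{3}\\ x_{2} & 1 & x_{2}^{3}\\ x_{3} & 1 & x_{3}^{3}\end{pmatrix}=0.
\]
A standard Vandermonde-type factorization gives
\[
\det\begin{pmatrix}1 & x_{1} & x_{1}^{3}\\ 1 & x_{2} & x_{2}^{3}\\ 1 & x_{3} & x_{3}^{3}\end{pmatrix}=(x_{2}-x_{1})(x_{3}-x_{1})(x_{3}-x_{2})(x_{1}+x_{2}+x_{3}),
\]
so three distinct points of $C^{*}$ lie on a line if and only if $x_{1}+x_{2}+x_{3}=0$. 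The case of tangent and osculating lines is handled by passing to multiplicities: the identity above vanishes to the right order when two or three of the $x_{i}$ coincide.

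Finally I would unwind the chord-tangent construction. Given $P,Q\in C^{*}(K)$ with $\phi(P)=a$, $\phi(Q)=b$, the third intersection $R$ of the line $PQ$ with $C$ satisfies $\phi(R)=-(a+b)$ by the collinearity criterion. The line through $O=[0:1:0]$ and $R=[-(a+b):1:-(a+b)^{3}]$ meets $C$ at the point with parameter $-\phi(R)=a+b$, because any line through $O$ cuts $C$ in two points whose parameters are opposite (the flex contributes a triple intersection $0+0+0=0$ when the line is $z=0$, and otherwise we apply the criterion with one coordinate equal to $0$). Hence $\phi(P+_{\mathrm{CT}}Q)=a+b$, so $\phi$ is a bijective group homomorphism $(C^{*}(K),+_{\mathrm{CT}})\xrightarrow{\sim}(K,+)$, and the collinearity criterion translates immediately into $a+b+c=0$ in $K$.

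The only delicate point is handling the degenerate lines, in particular the tangent at $O$ (which touches $C$ only at $O$ to order $3$) and tangents at other points; all such cases are absorbed into the factored determinant formula once one reads equalities among the $x_{i}$ as multiplicities.
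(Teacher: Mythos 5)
The paper does not prove this statement at all: it is quoted as a known result with a pointer to Silverman, Chapter III, Proposition 2.5. Your proposal supplies an actual proof, and it is essentially the standard one: parametrize the smooth locus of $y^{2}z=x^{3}$ by $x\mapsto[x:1:x^{3}]$, observe that this identifies $C^{*}(K)$ with $K$, and read off collinearity from the generalized Vandermonde identity
\[
\det\begin{pmatrix}1 & x_{1} & x_{1}^{3}\\ 1 & x_{2} & x_{2}^{3}\\ 1 & x_{3} & x_{3}^{3}\end{pmatrix}=(x_{2}-x_{1})(x_{3}-x_{1})(x_{3}-x_{2})(x_{1}+x_{2}+x_{3}),
\]
after which the chord--tangent law with $O=[0:1:0]$ as neutral element is forced to be $(a,b)\mapsto a+b$. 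The normalization to $y^{2}z=x^{3}$ over $K$ is legitimate (the form $y^{2}z=cx^{3}$ is reduced to $c=1$ by $(x,y,z)\mapsto(cx,cy,z)$), and this is exactly the curve the paper uses, so your argument is a correct and self-contained substitute for the citation.

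One step is not rigorous as written: you dispose of tangent and osculating lines by saying the factored determinant ``vanishes to the right order'' when some $x_{i}$ coincide. But the determinant is identically zero whenever two rows are equal, regardless of whether the line is actually tangent there, so it carries no information about intersection multiplicities. The clean way to cover all cases at once is to intersect an arbitrary line $ux+vy+wz=0$ with the parametrization: one gets $wx^{3}+ux+v=0$, and a line not through the cusp has $w\neq0$, so it meets $C^{*}$ in three points counted with multiplicity whose parameters are the roots of a cubic with vanishing $x^{2}$-coefficient, hence sum to zero; conversely any $a,b,c$ with $a+b+c=0$ arise this way. This single observation subsumes your determinant computation, handles tangencies and the flex at $O$ uniformly, and also shows that lines through the cusp meet $C^{*}$ in at most one point, which is why they never interfere. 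With that replacement the proof is complete.
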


In particular the tangent to a point $a$ in $C^{*}$ cuts $C^{*}$
at a residual point $b$ such that $2a+b=0$. 

Suppose that $K$ is a finite field of order $\#K=q=p^{n}$, where
$p$ is the characteristic of $K$. From Theorem \ref{thm:Silver},
the number of lines containing three distinct points of $K$ equals
to the number of sets 
\[
S_{a,b}=\{a,b,-(a+b)\}
\]
which have order $3$. One has $|S_{a,b}|=3$ if and only if 
\[
a\neq b\text{ and }a\neq-(a+b)\text{ and }b\neq-(a+b)
\]
which is equivalent to 
\begin{equation}
a\neq b\text{ and }b\neq-2a\text{ and }2b\neq-a.\label{eq:UN}
\end{equation}

\subsection{Characteristic $3$}

\subsubsection{General construction}

Suppose that $\text{Char}(K)=3$. Let $a,b\in K$ and $S_{a,b}=\{a,b,-(a+b)\}$.
Since $-2=1$ in $K$, the conditions \eqref{eq:UN} in order to have
$\#S_{a,b}=3$ are reduced to $a\neq b$. Then there are then $\tfrac{1}{3}\left(\begin{array}{c}
q\\
2
\end{array}\right)=\tfrac{q(q-1)}{6}$ lines containing $3$ points and through each point in $K$ there
are $\tfrac{1}{2}(q-1)$ such lines. That gives a 
\[
\left(q_{\tfrac{1}{2}(q-1)},\,\left(\tfrac{q(q-1)}{6}\right)_{3}\right)
\]
configuration of points and lines. Let $\cL_{q}$ be the dual of the
$q$ points of $K$.
\begin{thm}
\label{thm:arrangement-Char3}The line arrangement $\cL_{q}$ of $q$
lines has $\tfrac{q(q-1)}{6}$ triple points and no other singularities. 
\end{thm}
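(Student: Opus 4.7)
The plan is to work in the dual picture. By construction $\cL_q$ is a set of $q$ pairwise distinct lines, indexed by the $q$ points of $C^{*}(\FF_q)\cong\FF_q$, and $k$ of these lines meet at a common point if and only if the $k$ corresponding points of $C^{*}(\FF_q)$ are collinear in $\PP^2$. I would therefore reduce the statement to three claims: (i) no four distinct points of $C^{*}(\FF_q)$ are collinear; (ii) any two distinct points of $C^{*}(\FF_q)$ lie on a line that meets $C^{*}(\FF_q)$ in a third distinct point; and (iii) the number of collinear unordered triples is $\tfrac{q(q-1)}{6}$.

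For (i), Bezout gives that a line meets the cubic $C$ in at most $3$ points counted with multiplicity, hence contains at most $3$ distinct points of $C$, let alone of $C^{*}(\FF_q)$; this rules out $k$-fold points of $\cL_q$ with $k\geq 4$. For (ii), given distinct $a,b\in C^{*}(\FF_q)=\FF_q$, Theorem~\ref{thm:Silver} identifies the residual intersection with $c=-(a+b)\in\FF_q$, and the characteristic-$3$ computation already carried out in the text shows that $a\neq b$ alone forces $\#\{a,b,c\}=3$; so every pairwise intersection of lines in $\cL_q$ is in fact a triple point. For (iii), each unordered triple $\{a,b,c\}\subset\FF_q$ with $a+b+c=0$ and pairwise distinct entries comes from exactly $6$ ordered pairs $(a,b)$ with $a\neq b$, and there are $q(q-1)$ such pairs, producing $\tfrac{q(q-1)}{6}$ triples. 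Combining (i) and (ii) shows that every singular point of $\cL_q$ has multiplicity exactly $3$, and (iii) provides the count.

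The main point, more a key observation than a genuine obstacle, is that in characteristic $3$ the identity $-2=1$ collapses the tangential degeneracies $b=-2a$ and $a=-2b$ into the single equality $a=b$. In other characteristics these degeneracies would produce lines meeting $C^{*}(\FF_q)$ in only two distinct points of $C^{*}$, and the corresponding intersections of $\cL_q$ would be mere double points; it is exactly the hypothesis $\mathrm{char}(K)=3$ that promotes every pairwise intersection to a triple. Once this is in hand, the three claims above combine immediately to give the theorem.
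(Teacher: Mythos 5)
Your proof is correct, and it differs from the paper's in one step: how ``no other singularities'' is established. The paper's proof is an extremal counting argument: it first notes that the $\tfrac{q(q-1)}{6}$ three-point secants of the configuration dualize to triple points of $\cL_{q}$, and then invokes the general bound that an arrangement of $q$ lines has at most $\tfrac{1}{3}\cdot\tfrac{q(q-1)}{2}$ triple points, with equality forcing every one of the $\tfrac{q(q-1)}{2}$ pairs of lines to meet at a triple point and hence excluding all other singularities. You instead verify the structure directly in the dual: Bezout rules out four collinear points of $C^{*}(\FF_{q})$ (so no point of multiplicity $\geq4$), and the characteristic-$3$ identity $-2=1$ shows that every secant through two distinct points of $C^{*}(\FF_{q})$ meets the configuration in a third distinct point (so no double points), after which the count of unordered triples gives $\tfrac{q(q-1)}{6}$. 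The two arguments are logically equivalent here --- your claims (i) and (ii) are exactly what makes the paper's bound attained --- but yours is more self-contained and makes explicit where characteristic $3$ enters, while the paper's is shorter and reuses a general-purpose inequality. Both are fine; your closing remark correctly identifies the tangential degeneracies $b=-2a$, $2b=-a$ that would produce double points in other characteristics, which is precisely the content of the paper's Section 2.4.
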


\begin{proof}
The dual of the lines containing three points of $K$ are triple points
of the line arrangement $\cL_{q}$, thus $\cL_{q}$ has $\tfrac{q(q-1)}{6}$
triple points. A line arrangement with $q$ lines has at most $\tfrac{1}{3}\tfrac{q(q-1)}{2}$
triple points, and if that bound is reached there are no other singularities. 
\end{proof}
We remark that every point $a$ on $C^{*}$ is a flex: for any $a$
one has $3a=0$. Geometrically, the intersection of the tangent line
at $a$ with the cubic is three times the point $a$. 
\begin{rem}
As a group, or $\FF_{p}$-vector space, $\FF_{q}$ is isomorphic to
$(\FF_{p})^{n}$, where $q=p^{n}$. Since the relation $a+b+c=0$
is preserved under such an isomorphism, the group $GL_{n}(\FF_{p})$
induces symmetries on the matroid $M_{q}$ associated to $\cL_{q}$.
For $p=3$, since for any element $t$ one has $(a+t)+(b+t)+(c+t)=a+b+c$,
the relation $a+b+c=0$ is preserved by translation, thus the automorphism
group of $M_{3^{n}}$ is the general affine group of $\AA_{/\FF-3}^{n}$,
the semi-direct product of $(\ZZ/3\ZZ)^{n}$ with $GL_{n}(\FF_{3})$. 
\end{rem}

\subsubsection{Examples in characteristic $3$}

\noindent

$\bullet$ The line arrangement $\cL_{3}$ is $3$ lines through the
same point. 

$\bullet$ The line arrangement $\cL_{9}$ has $9$ lines and $12$
double points. The matroid associated to $\cL_{9}$ is the same as
the matroid associated to the Ceva(3) line arrangement (after suitable
labeling of the lines).

$\bullet$ %
Let $M_{27}$ be the matroid associated to the line arrangement $\cL_{27}$.
Using \cite{Oscar}, one computes that the realization space $\cR(M_{27})$
over $\overline{\FF}_{3}$ of $M_{27}$ is $3$ dimensional with a
non-reduced immersed component $Z$ of dimension $2$ and multiplicity
$2$, a model of it is an open sub-scheme of a rank $3$ quadric in
$\AA^{4}$. We give the equations for that model of $\cR(M_{27})$
in $\AA^{4}$ and the associated $27$ normal vectors of the line
arrangements in the appendix of the arXiv version of the paper.

Using the generic point of $\cR(M_{27})$, one can check using MAGMA
software that the 27 points of the dual of the generic realization
of $M_{27}$ are not on a cuspidal cubic. However for the generic
element of $Z$, (which is such that $Z_{\text{red}}$ is a sub-scheme
of $\cR(M_{27})$), the $27$ points dual to a realization of $M_{27}$
are on a cuspidal cubic curve. 
\begin{prop}
For any $q=3^{n}$ with $n\geq3$, the matroid $M_{q}$ has no realizations
over any field of characteristic $\neq3$ 
\end{prop}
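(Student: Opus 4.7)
The argument splits into a combinatorial reduction to the base case $n=3$ and a computer-assisted realization space analysis for $M_{27}$.

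\emph{Step 1: Reduction to $n=3$.} For $n \geq 3$, view $\FF_{3^n}$ as an $\FF_3$-vector space of dimension $n$ and fix any $\FF_3$-linear subspace $V \subset \FF_{3^n}$ with $\dim_{\FF_3} V = 3$, so $|V| = 27$. Since $V$ is closed under addition, the triples of $M_{3^n}$ lying inside $V$ are exactly the subsets $\{a,b,c\} \subset V$ with $a+b+c=0$, and any $\FF_3$-linear isomorphism $V \xrightarrow{\sim} \FF_{27}$ identifies this sub-matroid of $M_{3^n}$ with $M_{27}$. If $\cL = \sum_{a \in \FF_{3^n}} \ell_a$ realizes $M_{3^n}$ over a field $K$, the sub-arrangement $\sum_{a \in V} \ell_a$ has at most the singularities of $\cL$, hence only triple points, and its matroid is precisely the restriction, so it is a realization of $M_{27}$ over $K$. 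Thus it suffices to show $M_{27}$ has no realization in characteristic $\neq 3$.

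\emph{Step 2: The case $n=3$.} Model $\cR(M_{27})$ as a scheme over $\mathrm{Spec}\,\ZZ$: rigidify the $\mathrm{PGL}_3$ action by fixing four lines in general position, parametrize the remaining $23$ lines by their normal vectors $\mathbf{x} = (x_1, x_2, \dots)$, and write out the $117$ collinearity conditions coming from the triples of $M_{27}$ as polynomial equations with integer coefficients. Let $I \subset \ZZ[\mathbf{x}]$ be the resulting ideal saturated by the non-degeneracy locus (no four lines concurrent, no two lines equal, no accidental triple incidences). The earlier part of the paper provides explicit equations for this ideal in the appendix and describes $\cR(M_{27})_{/\overline{\FF}_3}$. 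The plan is to rerun the primary decomposition in OSCAR or MAGMA, this time with coefficients in $\ZZ$ (or in $\FF_p$ for each prime $p \neq 3$), and verify that every associated prime of $I$ contains the integer $3$, equivalently that $I \otimes_{\ZZ} \FF_p = (1)$ for all primes $p \neq 3$. This forces $\cR(M_{27})_{/K} = \emptyset$ whenever $\mathrm{char}(K) \neq 3$.

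\emph{Main obstacle.} The combinatorial reduction is essentially formal. The content of the proposition lives entirely in Step~2: one must certify that, after saturation, the realization ideal over $\ZZ$ reduces to the unit ideal modulo every prime $p \neq 3$. Since the analogous computation over $\overline{\FF}_3$ is already in hand and produces a nonempty, explicitly described scheme, the work is to execute the complementary computation over $\mathrm{Spec}\,\ZZ \setminus \{(3)\}$. The practical difficulty is keeping the Gröbner basis / primary decomposition tractable given the $27$ lines and $117$ constraints; a careful choice of frame and substitutions (as used already in the appendix model) is what makes the verification feasible.
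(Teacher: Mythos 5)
Your proposal matches the paper's proof: the paper reduces to $n=3$ by observing that $(\FF_{3})^{3}$ is a subgroup of $(\FF_{3})^{n}$, so restricting a realization of $M_{q}$ to the lines labelled by that subgroup yields a realization of $M_{27}$, and then invokes the computer computation (in OSCAR) that $M_{27}$ has no realizations in characteristic $\neq 3$. Your Step 1 is the same reduction and your Step 2 is the same computer-assisted verification, merely phrased as checking that the realization ideal over $\ZZ$ becomes the unit ideal modulo every prime $p\neq 3$.
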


\begin{proof}
Using \cite{Oscar}, one computes that the matroid $M_{27}$ has no
realizations over any field of characteristic $\neq3$. The matroid
$M_{q},\,q=3^{n}$ may also be described as triples $\{a,b,c\}$ in
$(\FF_{3})^{n}$ such that $a+b+c=0$. For $n\geq3$, the group $(\FF_{3})^{3}$
is a sub-group of $(\FF_{3})^{n}$, and a realization of $M_{q}$
gives a realization of $M_{27}$ by taking the labels indexed by that
sub-group. That forces the realization to be in characteristic $3$.
\end{proof}

\subsection{Characteristic $2$}

\subsubsection{General construction}

Suppose that $\text{Char}(K)=2$ and $K=\FF_{q}$, where $q=2^{n}$
for $n>1$. Condition in \eqref{eq:UN} for $S_{a,b}=\{a,b,-(a+b)\}$
to be of order $3$ are then $a\neq b$ and $a\neq0,\,b\neq0$. More
geometrically, since for any $a\in K$, one has $2a=0$, the tangent
line trough $a$ contains $0$, and the point $a$ with multiplicity
$2$, thus it contains no other points of the cuspidal cubic. That
implies that for any couple $a\neq b$ in $K^{*}$, the line going
through $a$ and $b$ meets the cubic in a third point $c=-(a+b)\not\in\{0,a,b\}$.
Through each point $a\neq0$ of $K$ there goes $\tfrac{1}{2}(q-2)$
lines containing $a$ and another point of $K^{*}$. There are $\tfrac{(q-1)(q-2)}{6}$
such lines, and each line contain $3$ points of $K^{*}$. One obtain
in that way a 
\[
\left((q-1)_{\tfrac{1}{2}(q-2)},\,\left(\tfrac{(q-1)(q-2)}{6}\right)_{3}\right)
\]
configuration of points and lines. Let $\cC_{q-1}$ be the dual of
the $q-1$ in $K^{*}$. We obtain
\begin{thm}
\label{thm:arrangement-Char2}The line arrangement $\cC_{q-1}$ of
$q-1$ lines has $\tfrac{(q-1)(q-2)}{6}$ triple points and no other
singularities. 
\end{thm}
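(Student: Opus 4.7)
The proof will closely mirror that of Theorem \ref{thm:arrangement-Char3}. The counting has already been carried out in the paragraph preceding the statement: for $K = \FF_q$ with $q = 2^n$ and $n > 1$, Theorem \ref{thm:Silver} together with the tangent-line analysis in characteristic $2$ shows that the lines of $\PP^2$ meeting $C^*$ in three distinct points of $K^*$ correspond bijectively to unordered triples $\{a,b,c\} \subset K^*$ with $a+b+c = 0$, and these number $\tfrac{(q-1)(q-2)}{6}$.

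The plan is then to pass to the dual arrangement: each such line in $\PP^2$ becomes a point at which three lines of $\cC_{q-1}$ concur, and distinct lines yield distinct triple points by projective duality. Hence $\cC_{q-1}$ has at least $\tfrac{(q-1)(q-2)}{6}$ triple points.

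Finally I would invoke the standard double-counting bound: for any arrangement of $N$ lines in $\PP^2$, if $t_k$ denotes the number of points where exactly $k$ lines meet, then counting pairs of lines in two ways gives
\[
\sum_{k \geq 2} \binom{k}{2} t_k \;=\; \binom{N}{2},
\]
so in particular $t_3 \leq \tfrac{1}{3}\binom{N}{2}$, with equality forcing $t_k = 0$ for all $k \neq 3$. Applied with $N = q-1$ the upper bound is exactly $\tfrac{(q-1)(q-2)}{6}$; since it is attained, $\cC_{q-1}$ can carry no singularities other than its $\tfrac{(q-1)(q-2)}{6}$ triple points.

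There is no serious obstacle: all of the geometric content resides in Theorem \ref{thm:Silver} and the incidence count preceding the statement, so the proof is essentially a verbatim repetition of the characteristic-$3$ argument with $q$ replaced by $q-1$ and with the points of $C^*(K) = K$ replaced by those of $K^* = K \setminus \{0\}$.
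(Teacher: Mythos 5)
Your proposal is correct and follows exactly the argument the paper uses: the paper gives no separate proof for this theorem, relying on the incidence count in the preceding paragraph together with the same maximality argument as in Theorem \ref{thm:arrangement-Char3}, namely that an arrangement of $N$ lines has at most $\tfrac{1}{3}\binom{N}{2}$ triple points and that attaining this bound excludes all other singularities. Nothing to add.
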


\subsubsection{Examples in characteristic $2$}

\noindent

$\bullet$ The line arrangement $\cC_{2^{2}-1}$ is $3$ lines through
the same point. 

$\bullet$ The line arrangement $\cC_{2^{3}-1}$ is the Fano plane. 

$\bullet$ Let $N_{2^{n}-1}$ be the matroid associated to $\cC_{2^{n}-1}$;
this is the set of triples $\{a,b,a+b\}\subset\FF_{2^{n}}$ with $a\neq b,\,a\neq0,\,b\neq0$.
Using \cite{Oscar}, one computes that the realization space $\cR(N_{15})$
of realizations of $N_{15}$ over $\overline{\FF}_{2}$ is an open
subscheme in $\PP_{/\overline{\FF_{2}}}^{3}$, in particular it is
$3$ dimensional. We checked that the generic element in $\cR(N_{15})$
is such that there is a cuspidal cubic containing the $15$ points
dual to the $15$ lines.

For $n>3$, in \cite{HKS}, are constructed some line arrangements
$\cC_{2^{n}-1}'$ (over the field $\FF_{2^{3n-1}}$) with only triple
points and with the same number $q-1=2^{n}-1$ of lines as $\cC_{2^{n}-1}$.
The $2^{n}-1$ points dual to $\cC_{2^{n}-1}'$ are the image of the
$2^{n}-1$ points of $\PP^{n-1}(\FF_{2})$ in $\PP^{n-1}$ by a generic
projection onto $\PP^{2}$. 

In fact one can check that the line arrangements $\cC_{q-1}$ and
$\cC_{q-1}'$ define isomorphic matroids as follows. As a group, the
field $K=\FF_{q}$ (for $q=2^{n}$) is the group $(\FF_{2})^{n}$
and the set $(\FF_{2})^{n}\setminus\{0\}$ is contained in $\PP^{n-1}(\FF_{2})$.
Two distinct points $a,b$ of $(\FF_{2})^{n}\setminus\{0\}$ generate
a line in $\PP^{n-1}$ which contains the points $a,b$, $c=a+b$
(so that $a+b+c=0$, since we are in characteristic $2$), and no
other points of $\PP^{n-1}(\FF_{2})$. As shown in \cite{HKS}, a
generic projection $f:\PP^{n}\dashrightarrow\PP^{2}$ induces a bijection
between the set $\PP^{n-1}(\FF_{2})$ and its image and $f$ preserves
the alignments: for $a,b,c\in\PP^{n-1}(\FF_{2})$, one has $a+b+c=0$
in $(\FF_{2})^{n}$ if and only if $f(a),f(b)$ and $f(c)$ are on
a line in $\PP^{2}$. Thus a bijective morphism $\eta$ of groups
between $K$ and $(\FF_{2})^{n}$ defines a isomorphism between the
matroids $M(\cC_{q})$ and $M(\cC_{q}')$, i.e. sends bijectively
the triples $\{a,b,c\}$ of $N=N(\cC_{q})$ onto the triples of $N'=N'(\cC_{q}')$.
If $\cC_{q}'=(\ell_{a})_{a\in\FF_{2}^{n}}$ is a realization of $N'$,
then $(\ell{}_{\eta(a)})_{a\in K}$ is a realization of $\cC_{q}$.

The construction of $\cC_{15}'$ in \cite{HKS} explains why the realization
space $\cR(N_{15})$ is an open sub-scheme of $\PP_{/\overline{\FF}_{2}}^{3}$,
since the line arrangement $\cC_{15}'$ is obtained by the projection
to $\PP^{2}$ from a generic point in $\PP^{3}$ of the $15$ points
of $\PP^{3}(\FF_{2})$ in $\PP^{3}$.
\begin{rem}
For $n\geq3$, the projections $\PP^{n}\dashrightarrow\PP^{2}$ are
parametrized by the Grassmannian $G(n-2,n+1)$. For $n>4$, it would
be interesting to understand if as for $n=3$, an open sub-scheme
of the grassmannian is also the realization spaces of the realizations
of $N_{2^{n}-1}$. At least we checked that for $n=4$, the realization
space $\cR(N_{31})$ is $6$ dimensional and rational, as $G(2,5)$. 
\end{rem}

\begin{rem}
The line arrangements $\cC_{2^{n}-1}$ are defined over $\FF_{2^{n}}$:
this is an answer the question before Theorem 3.4 in \cite{HKS},
to find a smaller field over which $\cC'_{q}$ may be defined. 
\end{rem}

\subsection{Characteristic $\protect\neq2,3$}

Suppose that $\text{Char}(K)=p>0$ and $p\notin\{2,3\}$. Then for
$b=-2a$ and $b=-\tfrac{1}{2}a$, $S_{a,b}$ has order $2$. Thus,
the number of lines containing $3$ points and passing through $a\in C^{*}$
is $\tfrac{1}{2}(q-3)$, moreover the number of lines containing $3$
(distinct) points of $K$ is 
\[
\tfrac{1}{6}q(q-3)
\]
and the number of lines containing $2$ points is $q$. One has a
\[
\left(q_{\tfrac{1}{2}(q-3)},\,\left(\tfrac{q(q-3)}{6}\right)_{3}\right)
\]
configuration of points and lines. The dual is a line arrangement
of $q$ lines and $\tfrac{q(q-3)}{6}$ triple points. It has 
\[
\tfrac{q(q-1)}{2}-3\tfrac{q(q-3)}{6}=q
\]
 double points (corresponding to the $q$ $2$-rich lines in the dual
space).

\vspace{0.3cm} 

\noindent Xavier Roulleau\\
Université d'Angers, \\
CNRS, LAREMA, SFR MATHSTIC, \\
F-49000 Angers, France 

\noindent xavier.roulleau@univ-angers.fr
\end{document}